\definecolor{candyapplered}{rgb}{1.0, 0.03, 0.0}
\definecolor{mediumblue}{rgb}{0.0, 0.0, 0.8}
\newlength{\bibitemsep}\setlength{\bibitemsep}{.2\baselineskip plus .9\baselineskip minus .1\baselineskip}
\newlength{\bibparskip}\setlength{\bibparskip}{0pt}
\let\oldthebibliography\thebibliography
\renewcommand\thebibliography[1]{%
  \oldthebibliography{#1}%
  \setlength{\parskip}{\bibitemsep}%
  \setlength{\itemsep}{\bibparskip}%
}
\declaretheorem[numberwithin=section]{theorem}
\declaretheorem[sibling=theorem]{lemma}
\declaretheorem[sibling=theorem, style=remark]{remark}
\declaretheorem[sibling=theorem]{proposition}
\declaretheorem[sibling=theorem]{conjecture}
\declaretheorem[sibling=theorem]{corollary}
\numberwithin{equation}{section}
\begin{document}
\title[Origamis associated to minimally-intersecting filling pairs]
{Origamis associated to minimally-intersecting filling pairs}
\author[Aougab, Menasco, Nieland]{Tarik Aougab, William W. Menasco, and Mark Nieland}

\address{Department of Mathematics, Haverford College}
\email{taougab@haverford.edu}

\address{Department of Mathematics, University at Buffalo}
\email{menasco@buffalo.edu}

\address{School of Mathematical Sciences, Rochester Institute of Technology}
\email{mansma@rit.edu}

\keywords{curves on surfaces, origamis}

\begin{abstract}
Let $S_{g}$ denote the closed orientable surface of genus $g$.  In joint work with Huang, the first author constructed exponentially-many (in $g$) mapping class group orbits of pairs of simple closed curves whose complement is a single topological disk. Using different techniques, we improve on this result by constructing factorially-many (again in $g$) such orbits. These new orbits are chosen so that the absolute value of the algebraic intersection number is equal to the geometric intersection number, implying that each pair naturally gives rise to an origami. We collect some rudimentary experimental data on the corresponding $SL(2, \mathbb{Z})$-orbits and suggest further study and conjectures. 

\end{abstract}

\maketitle

\section{Introduction}

Let $S_{g}$ denote the closed orientable surface of genus $g$. A finite collection of curves $\Gamma= \left\{\gamma_{1}, ... \gamma_{n} \right\}$ in pairwise-minimal position on $S_{g}$ is said to \textit{fill} if the complement $S_{g} \setminus \Gamma$ is a disjoint union of topological disks. When each $\gamma_{i}$ is simple and $n= 2$, we call $\Gamma$ a \textit{filling pair}. 

Thurston used filling pairs to construct pseudo-Anosov mapping classes by composing Dehn twists in the appropriate way around the curves in the pair \cite{TS}. The dilatation of these pseudo-Anosovs directly relates to the geometric intersection number of the pair. Penner later generalized Thurston's construction to pairs of filling multicurves \cite{P}. Any pseudo-Anosov obtainable by such a construction is known as a \textit{Penner-Thurston map}.

Filling multicurve pairs with small intersection number yield maps with relatively small dilatation amongst all Penner-Thurston maps. Motivated by this and other connections to the mapping class group-- which will henceforth be denoted by $\mbox{Mod}(S_{g})$-- in joint work with Huang \cite{AH}, the first author constructed exponentially-many (in $g$) $ \mbox{Mod}(S_g)$-orbits of filling pairs whose geometric intersection number is the minimum possible among all filling pairs on $S_{g}$.

In this paper, using a new construction, we build factorially-many minimally-intersecting filling pairs, all of which satisfy the additional condition that the absolute value of the algebraic intersection is equal to the geometric intersection number. 

\begin{theorem} \label{main} For odd $g \geq 3$, there exist at least $(g-2)!$ distinct $ Mod^{\pm}(S_g) $-orbits of filling pairs $\left\{\gamma_{1}, \gamma_{2} \right\}$ so that the absolute value of the algebraic intersection number is $2g-1$. For even $g >2$, there are at least $(g-5)\cdot (g-3)!$ such orbits. 
\end{theorem}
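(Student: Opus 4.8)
The plan is to pass from topology to combinatorics via the origami dictionary and then to produce and separate an explicit, factorial-sized family of permutation pairs. I would begin with the standard Euler-characteristic count: if $\{\gamma_1,\gamma_2\}$ fills $S_g$ and cuts it into a single disk, then the $4$-valent graph $\gamma_1\cup\gamma_2$ has $i=i(\gamma_1,\gamma_2)$ vertices, $2i$ edges, and one face, so $i-2i+1=2-2g$ forces $i=2g-1$. Since geometric intersection always dominates $|\hat i|$, the hypothesis $|\hat i|=2g-1$ simultaneously forces $i=2g-1$ and forces every crossing to carry the same sign; this coherence is precisely what lets $\gamma_1\cup\gamma_2$ serve as the $1$-skeleton of a square-tiled surface (origami). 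Hence the objects to be counted are origamis with $n:=2g-1$ unit squares in which $\gamma_1$ and $\gamma_2$ are the core curves of a single horizontal and a single vertical cylinder.

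Encoding such an origami by its pair $(\sigma,\tau)\in S_n\times S_n$ of horizontal and vertical gluing permutations, the structural requirements become: (a) $\sigma$ and $\tau$ are each $n$-cycles (one cylinder in each direction, so that $\gamma_1,\gamma_2$ are simple); (b) $\langle\sigma,\tau\rangle$ acts transitively (connectedness), which is automatic from (a); and (c) the commutator $[\sigma,\tau]=\sigma\tau\sigma^{-1}\tau^{-1}$ is a single $n$-cycle. Condition (c) is the single-disk condition: if $[\sigma,\tau]$ has $c$ cycles, the genus satisfies $n-c=2g-2$, and $n=2g-1$ gives $c=1$. Two valid pairs determine the same $\mathrm{Mod}^{\pm}(S_g)$-orbit of the unordered pair exactly when they are related by simultaneous conjugation in $S_n$ together with the finite symmetry group generated by relabeling, by the curve-swap $(\sigma,\tau)\mapsto(\tau,\sigma)$, and by the orientation reversal $(\sigma,\tau)\mapsto(\sigma^{-1},\tau^{-1})$. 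Counting orbits thus reduces to counting simultaneous conjugacy classes of valid pairs modulo this bounded symmetry group.

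Next I would fix $\sigma=(1\,2\,\cdots\,n)$ once and for all (legitimate after conjugation) and build an explicit family $\tau=\tau_\pi$ indexed by a permutation $\pi$ of a set of size about $g$, prescribing the vertical threading through interchangeable ``blocks'' whose arrangement is dictated by $\pi$, designed so that (a) holds identically and so that $\pi$ is genuinely recorded in the data. The crux is verifying (c): one must show that for every admissible parameter the commutator $[\sigma,\tau_\pi]$ is a full $n$-cycle. I expect this to be the main obstacle, and I would attack it by interpreting the commutator as the return map of the ``diagonal'' flow across the squares, reducing the full-cycle property to a connectivity statement about a single combinatorial gadget built from $\pi$. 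I anticipate that this verification is also precisely where the parity of $g$ enters: for odd $g$ every permutation $\pi$ should be admissible, giving $(g-2)!$ candidate classes, whereas for even $g$ a sign obstruction forces a modification of the construction, introducing an auxiliary choice among roughly $g-5$ options alongside an $\mathrm{Sym}(g-3)$ worth of arrangements, accounting for the $(g-5)\cdot(g-3)!$ count.

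Finally I would separate the orbits by extracting a conjugation-invariant that reconstructs $\pi$ from $(\sigma,\tau_\pi)$ — for instance a cyclic word recording the block structure and its $\tau_\pi$-linking read off along the fixed cycle $\sigma$ — and then checking either that this invariant is genuinely preserved by the curve-swap and orientation-reversal symmetries of the second paragraph, or that those symmetries act on the parameter set in a transparent, cardinality-preserving way. Distinct admissible parameters would then yield distinct $\mathrm{Mod}^{\pm}(S_g)$-orbits, and tallying the admissible parameters produces the two stated lower bounds. The two genuinely delicate points are the full-cycle verification in (c) and showing that the invariant is not collapsed by the finite symmetry group; the remainder is bookkeeping.
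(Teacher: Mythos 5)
Your reduction to combinatorics is sound and is exactly the paper's: the Euler-characteristic count giving $\iota=2g-1$, the observation that $|\hat{\iota}|=\iota$ makes all crossings coherently signed and hence yields an origami, the encoding by a pair of $(2g-1)$-cycles $(\sigma,\tau)$ with $[\sigma,\tau]$ a single $(2g-1)$-cycle (the one-vertex condition), and the normalization $\sigma=(1,2,\dots,2g-1)$ so that orbits are detected by conjugacy of $\tau$ under the centralizer $\langle\sigma\rangle$. But the proposal stops precisely where the content of the theorem begins. You never specify the family $\tau_\pi$: what the ``blocks'' are, how $\pi$ threads them, or why the resulting $\tau_\pi$ is even a single cycle. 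Both of the properties you defer --- that $\tau_\pi$ is a $(2g-1)$-cycle and that $[\sigma,\tau_\pi]$ is a $(2g-1)$-cycle --- are nontrivial and occupy all of Section 4 of the paper. The paper's construction places the numbers $1,\dots,2g-1$ in a row above the cycle notation for $\sigma$, inserting pairs $(i+1,i)$ ($i$ even) over pairs $(j,j+1)$ ($j$ even) in an order forced by the previous placement; the single-cycle property of $\tau$ then follows from a parity ``walk-back'' argument that uses $g-1$ even, and the single-cycle property of $[\sigma,\tau]$ follows from the alternating parity of the entries of $\rho=\tau\sigma\tau^{-1}$. Your guess that ``for odd $g$ every permutation $\pi$ should be admissible'' is not something you can assert without the construction in hand, and your account of the even case (a ``sign obstruction'' plus roughly $g-5$ auxiliary choices) does not match the actual mechanism: the paper builds an even-genus solution by augmenting an odd-genus $(2g-3)$-cycle solution with two extra squares, parametrized by a choice of odd $k\neq\eta^{-1}(1)$ ($g-3$ choices), and the drop to $g-5$ comes only later, from the orbit-separation analysis, not from admissibility.

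The second deferred point --- separating orbits --- is also not free. The paper does not merely divide by the order $2g-1$ of the centralizer; it proves (Remarks 4.3 and 4.6) that distinct constructed solutions are never conjugate by a power of $\sigma$, via a parity argument on the shift $k$ and an analysis of which entry cyclically precedes $1$. In the even case this analysis genuinely eliminates two of the $g-3$ choices of $k$, which is where $(g-5)\cdot(g-3)!$ comes from. So while your framework is the right one (and agrees with the paper's), the proposal as written contains no construction and no verification of either cycle condition, and hence no proof of the theorem; the two steps you flag as ``genuinely delicate'' are the entire theorem.
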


\subsection{Square-tiled surfaces}

A \textit{square-tiling} of a surface $S= S_{g}$ is a decomposition of $S$ into quadrilaterals with at least four regions around each vertex. If each region is identified with a square whose area is scaled such that the total area is $1$, this induces a unit-area singular flat metric on $S$. The horizontal foliation on each individual square can be extended to a measured singular foliation $\mathcal{F}$ on $S$. A fundamental theorem of Hubbard and Masur then associates a unique quadratic differential to this horizontal foliation and, therefore, to the square-tiling \cite{HM}. In this way, one can interpret a square-tiling of $S$ as a point in the space of unit-area quadratic differentials. If $\mathcal{F}$ is orientable, the square-tiling supports the structure of a \textit{translation surface}, and the associated quadratic differential will be the square of an abelian differential. In this case, we refer to the surface equipped with its square-tiling as a \textit{square-tiled surface}. 

An abelian differential on a surface of genus $ g>1 $ must have $2g-2$ zeros, counted with multiplicity.  The moduli space of abelian differentials is naturally stratified by the degrees of these zeros:  given $m_{1},..., m_{n} \in \mathbb{N}$ satisfying $m_{1}+...+m_{n}= 2g-2$, the \textit{stratum} $\mathcal{H}(m_{1},..., m_{n})$ of the moduli space consists of those abelian differentials which have $n$ zeros of degrees $m_{1},..., m_{n}$. Each stratum admits a volume element -- the \textit{Masur-Veech volume}-- and an invariant $SL(2,\mathbb{R})$-action.  Masur and Veech proved that each stratum has finite volume (\cite{M}, \cite{V1}, \cite{V2}). 

To compute the volume of a stratum of abelian differentials, Eskin-Okounkov (\cite{EO}) use the following approach suggested by Zorich (\cite{Zo}): each stratum can be locally modeled on the relative cohomology $H^{1}(S, P; \mathbb{C})$, where $P$ is the finite set of singularities. The square-tiled surfaces in $ S $ are analogous to ``integer points,'' as their period coordinates lie in $\mathbb{Z} \oplus i \mathbb{Z}$. The asymptotics for the number of square-tiled surfaces constructed from at most $N$ squares in a given stratum can then be used to compute the volume.

For this reason, counting square-tiled surfaces has deep connections to Teichm{\"u}ller dynamics and to the study of flat surfaces. In the literature, a square-tiled surface is also called an \textit{origami} (alternatively defined as a Riemann surface obtained as a finite cover of the square torus branched over a single point). The $SL(2,\mathbb{R})$-action on any stratum restricts to an $SL(2,\mathbb{Z})$-action on the origamis in that stratum, and the study of the corresponding $SL(2,\mathbb{Z})$-orbits is a highly active area of research (\cite{HS} \cite{S}, \cite{HL}, \cite{Mc}). 

A filling pair $\lbrace \alpha, \beta \rbrace$ on $S$ in minimal position naturally gives rise to a decomposition of $S$ into quadilaterals.  We can interpret $\alpha \cup \beta$ as a graph on $S$ whose vertices are elements of $ \alpha \cap \beta $ and whose edges are components of the symmetric difference $ \alpha \bigtriangleup \beta $; the dual of this graph will be the $1$-skeleton of a quadilateral decomposition. If the absolute value of the algebraic intersection number equals the geometric intersection number ($|\hat{\iota}(\alpha, \beta)|=\iota(\alpha, \beta)$), the decomposition yields an origami. In the language of origamis and flat geometry, when $g$ is odd, \autoref{main} can be reframed as: 

\vspace{2 mm} 

\textit{In the minimal stratum $\mathcal{H}(2g-2)$, there exist at least $(g-2)!$ square-tiled surfaces with $2g-1$ squares whose vertical and horizontal cylinder decompositions each consist of one cylinder.} 

\vspace{2 mm}

Since these origamis have the simplest possible horizontal and vertical cylinder decompositions and the minimal number of squares possible, We will refer to such a surface as a \textit{minimal origami}. Using representation-theoretic tools and following the techniques of Zagier (\cite{Za}), Delecroix, Goujard, Zograf, and Zorich compute the exact number of square-tiled surfaces in $\mathcal{H}(2g-2)$ that have $2g-1$ squares and one maximal horizontal cylinder (see Equation (2.21) in \cite{DGZZ}). By contrast, both the vertical and horizontal cylinder decompositions of the origamis from \autoref{main} consist of a single cylinder, and they arise from a completely constructive recipe. 

We advertise the constructive aspect of \autoref{main} as our key contribution---the recipe can be easily implemented on a computer to yield a large repository of origamis in $\mathcal{H}(2g-2)$ with the simplest cylinder decompositions and the smallest number of squares. During the 2018 Summer@ICERM program, a rudimentary version of the recipe was used to identify several distinct $SL(2, \mathbb{Z})$-orbits of these simple origamis; we summarize these findings in Section $5$. 

We conjecture that there should be many distinct $SL(2,\mathbb{Z})$-orbits as genus grows: 

\begin{conjecture} \label{many orbits} Let $N_{g}$ denote the number of distinct $SL(2, \mathbb{Z})$-orbits of origamis in $\mathcal{H}(2g-2)$ that have $2g-1$ squares and whose horizontal and vertical cylinder decompositions each consist of one cylinder. Then $N_{g}$ grows super-exponentially in $g$. 
\end{conjecture}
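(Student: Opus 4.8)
The starting point is \autoref{main}: each of its $(g-2)!$ (respectively $(g-5)\cdot(g-3)!$) distinct $\Mod^{\pm}(S_g)$-orbits of minimally-intersecting filling pairs determines a minimal origami, and an isomorphism of the resulting origamis induces a homeomorphism carrying one filling pair to the other—the two curves are recovered as the core curves of the horizontal and vertical cylinders—so distinct $\Mod^{\pm}(S_g)$-orbits yield non-isomorphic origamis, up to a bounded ambiguity coming from the swap of the two cylinders and from orientation reversal. Thus \autoref{main} already supplies a family $\mathcal{O}_g$ of minimal origamis of super-exponential size. The plan is to show that these fall into super-exponentially many $\SL(2,\bZ)$-orbits, and the key reduction is the following: if every $\SL(2,\bZ)$-orbit contains at most $B(g)$ members of $\mathcal{O}_g$, then $N_g \geq (g-2)!/B(g)$, which is still super-exponential provided $B(g)$ grows at most exponentially. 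So it suffices to bound from above the number of our minimal origamis inside a single orbit.

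To attack this bound, encode each minimal origami as a pair $(h,v)$ in the symmetric group $\mathrm{Sym}_N$ on $N = 2g-1$ symbols, where $h$ and $v$ are the horizontal and vertical gluing permutations; the one-cylinder conditions say that $h$ and $v$ are each $N$-cycles, while membership in the minimal stratum $\cH(2g-2)$ says that the commutator $[h,v]$ is also an $N$-cycle. The crucial point is that the one-cylinder-in-both-directions property is \emph{not} preserved by $\SL(2,\bZ)$: the parabolic $T = \left(\begin{smallmatrix} 1 & 1 \\ 0 & 1 \end{smallmatrix}\right)$ sends $(h,v)$ to $(h, v h^{-1})$, and $vh^{-1}$ is generically far from an $N$-cycle. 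Consequently the members of $\mathcal{O}_g$ lying in a fixed orbit sit over a very restricted set of cusps of the associated Veech group, namely those completely periodic directions whose cylinder decomposition is a single cylinder and whose orthogonal direction is also a single cylinder. I would bound $B(g)$ by controlling these ``doubly one-cylinder'' cusps: each contributes at most its cusp width (at most $N$) to the count, so it would suffice to bound the number of one-cylinder parabolic directions in each orbit by an exponential function of $g$.

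A complementary line is to produce a genuine $\SL(2,\bZ)$-invariant that already separates super-exponentially many members of $\mathcal{O}_g$. The coarse invariants are visibly inadequate: the connected-component and spin invariants of $\cH(2g-2)$ take only boundedly many values, and the monodromy group $\langle h, v\rangle \le \mathrm{Sym}_N$—which is an $\SL(2,\bZ)$-invariant up to conjugacy, since both generators of $\SL(2,\bZ)$ preserve the subgroup generated by $h$ and $v$—is, for generic pairs of $N$-cycles, the full symmetric or alternating group, hence again too coarse. The more promising invariants are intrinsic to the Veech group: its index (the orbit size), its multiset of cusp widths, and the permutation action of $\SL(2,\bZ)$ on the orbit. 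The plan here is to read these off directly from the filling-pair combinatorics produced by the construction behind \autoref{main} and to exhibit super-exponentially many distinct cusp-width multisets.

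The hardest step, in either approach, is the passage from the purely combinatorial-topological data of the filling pair to the flat-dynamical $\SL(2,\bZ)$-data, precisely because the $\SL(2,\bZ)$-action mixes cylinder decompositions in a way that is not controlled by any known closed-form description for $\cH(2g-2)$. Bounding the number of doubly one-cylinder directions per orbit—equivalently, bounding $B(g)$—seems to require new structural results about one-cylinder cusps in this stratum, beyond what the $\Mod(S_g)$-classification provides. The experimental data of Section~5, which indicates that orbit sizes are small relative to the size of $\mathcal{O}_g$, is consistent with a subexponential or even polynomial $B(g)$, and thus motivates, but does not yet prove, the claimed super-exponential growth of $N_g$.
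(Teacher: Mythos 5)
There is a genuine gap here, and it is the whole problem: the statement you were asked about is Conjecture \ref{many orbits} of the paper, and the paper itself does not prove it. The authors prove \autoref{main} (super-exponentially many $\Mod^{\pm}(S_g)$-orbits of minimal origami pairs), present experimental data for $g \le 7$ in Section 5, and explicitly remark that they do not even know whether $N_g$ is monotonically increasing in $g$. Your proposal reproduces exactly this state of affairs: you correctly extract the family $\mathcal{O}_g$ of super-exponentially many origamis from \autoref{main} (and your reduction of topological equivalence to origami equivalence, up to a bounded factor, is sound and is essentially the content of \autoref{characterizing perm}), but the step on which everything hinges --- the bound $B(g)$ on how many members of $\mathcal{O}_g$ a single $\SL(2,\bZ)$-orbit can contain --- is asserted as a goal, not established. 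Your own final sentence concedes this, so what you have written is a research plan, not a proof.

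Concretely, why the missing step is not minor: nothing known rules out a single orbit absorbing essentially all of $\mathcal{O}_g$. The total number of one-horizontal-cylinder square-tiled surfaces with $2g-1$ squares in $\cH(2g-2)$ is of factorial magnitude (see the Delecroix--Goujard--Zograf--Zorich count cited in the introduction), so orbit sizes, and with them the number of doubly-one-cylinder cusps of a Veech group, have no a priori subfactorial bound; your observation that each such cusp contributes at most its width (at most $2g-1$) only converts the problem into bounding the number of such cusps, which is the same open question in different clothes. Your complementary route fares no better as stated: the invariants you can actually compute from the filling-pair combinatorics (number of squares, stratum component, monodromy) are the ones you yourself concede are too coarse, while the invariants that could plausibly separate super-exponentially many orbits (Veech group index, cusp-width multisets) are exactly the quantities for which no closed-form description exists in $\cH(2g-2)$ and for which you exhibit no computation, even for a single pair of the constructed origamis at general $g$. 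The experimental data of Section 5 (e.g., the monodromy groups $M_{11}$ and $L_2(8)$ distinguishing orbits in low genus) is evidence in the conjecture's favor, but, as in the paper, it proves nothing beyond $g \le 7$.
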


\subsection{Outline}  In section $2$, we recall some preliminaries for both origamis and curves on surfaces.  In section $3$, we explain how to encode an origami with a minimally-intersecting filling pair as a permutation. In section $4$, we outline the main construction. Finally, in Section $5$, we summarize some results obtained in low genera after computationally implementing the main construction and investigating the orbits under the $SL(2, \mathbb{Z})$-action. 

\subsection{Acknowledgements.} The first author thanks Jayadev Athreya and Samuel Taylor for many helpful conversations. The first author was supported by NSF Grant DMS 1939936 while this work was completed. The authors thank Zichen Cui, Ajeet Gary, Paige Helms, Ionnis Iakovoglu, Tasha Kim, and J.T. Rustad for their work during the 2018 Summer@ICERM program. The authors thank the Institute for Computational and Experimental Research in Mathematics (ICERM) for their hosting of the Summer@ICERM program during which the experimental data in Section $5$ was obtained. Finally, the authors thank Luke Jeffreys for extremely helpful discussions and for noticing an error in the original version of the even genus construction.

\section{preliminaries}

Let $S= S_{g}$ be oriented. If $\alpha, \alpha'$ are simple closed curves on $S$ which are isotopic to one another, we write $\alpha \sim \alpha'$. Then, given essential simple closed curves $\alpha, \beta$ on $S$, the \textit{geometric intersection number}, $\iota(\alpha, \beta)$, is defined by 

\[ \iota(\alpha, \beta) = \min( |\alpha' \cap \beta'| : \alpha' \sim \alpha, \beta' \sim \beta). \] 

The curves $\alpha$ and $\beta$ are said to be in \textit{minimal position} if $|\alpha \cap \beta|= \iota(\alpha, \beta)$. 

Supposing that $\alpha, \beta$ are essential simple closed curves, equip each with a choice of orientation. Then, for $p \in \alpha \cap \beta$, define its \textit{index} $\epsilon(p) \in \left\{1,-1 \right\}$ to be $1$ if the orientation of $p$ from $\alpha$ to $\beta$ given by the right hand rule agrees with the orientation of $S$, and $-1$ otherwise. Then the \textit{algebraic intersection number}, $\hat{\iota}(\alpha, \beta)$, is defined by 

\[ \hat{\iota}(\alpha, \beta) = \sum_{p \in \alpha \cap \beta} \epsilon(p). \]

Note that $\hat{\iota}(\alpha, \beta)= -\hat{\iota}(\beta, \alpha)$, and that the value of $\hat{\iota}(\alpha, \beta)$ does not change when we replace either $\alpha$ (or $\beta$) with another curve in its isotopy class. It follows that the absolute value of $\hat{\iota}$ is well-defined over pairs of isotopy classes $\left\{[\alpha], [\beta] \right\}$. 

The \textit{mapping class group} of $S$, denoted $\mbox{Mod}(S)$, is the group of isotopy classes of orientation-preserving self-homeomorphisms of $S$. The \textit{extended mapping class group} of $S$, denoted $\mbox{Mod}^{\pm}(S)$, is the group of isotopy classes of self-homeomorphisms of $S$, including the orientation-reversing ones. Given two collections of isotopy classes $\Gamma= \left\{\gamma_{1},..., \gamma_{n} \right\}$, $\Omega= \left\{\omega_{1},..., \omega_{n} \right\}$, we say that $\Gamma$ and $\Omega$ are in the same $\mbox{Mod}(S)$ (resp. $\mbox{Mod}^{\pm}(S)$) orbit if there exists some $g \in \mbox{Mod}(S)$ (resp. in $\mbox{Mod}^{\pm}(S)$) so that $g \cdot \gamma_{i} = \omega_{i}$ for $i=1,.., n$. 

We say that a pair of minimal position curves $\left\{\alpha, \beta \right\}$ is a \textit{filling pair} if their complement is a disjoint union of disks. Given a filling pair $\left\{\alpha, \beta \right\}$, let $R_{1},..., R_{n}$ be the complementary disk regions. Since $\alpha \cup \beta$ is a $4$-valent graph on $S$, the graph has twice as many edges as vertices. Using Euler's formula, it follows that 
\[ 2- 2g= \chi(S_{g}) = \iota(\alpha, \beta) - 2\iota(\alpha, \beta) + n \]
\[ \Rightarrow \iota(\alpha, \beta) \geq 2g-1. \]

Thus, when a filling pair $\left\{\alpha, \beta \right\}$ satisfies $\iota(\alpha, \beta) = 2g-1$, we call it a \textit{minimally-intersecting filling pair}. In \cite{AH}, the first author and Huang directly construct exponentially-many $\mbox{Mod}^{\pm}(S)$-orbits of minimally intersecting filling pairs. However, the construction \textit{never} produced filling pairs satisfying $|\hat{\iota}(\alpha, \beta)|= 2g-1$. Thus, the filling pairs constructed below in Section $4$ are all distinct from those constructed in \cite{AH}. 

 An \textit{origami} is a Riemann surface obtained as a finite cover of the torus branched over a single point. The union of the core curves of each cylinder in the horizontal cylinder decomposition of an origami is called the \textit{horizontal multicurve}, denoted $\alpha$; likewise, the union of the vertical core curves is the \textit{vertical multicurve} and is denoted $\beta$.  Since the number of squares of the origami is equal to $ \iota(\alpha, \beta)$, the horizontal and vertical curves of a minimal origami form a minimally-intersecting filling pair.  If $ \lbrace \alpha, \beta \rbrace $ is a minimally-intersecting filling pair with $ \iota(\alpha, \beta)=|\hat{\iota}(\alpha, \beta)|$ (so that the underlying surface is a minimal origami), we refer to it as a \textit{minimal origami pair}.  If we orient $\alpha$ and label the squares from $1$ to $N$ (the total number of squares), we obtain an associated \textit{horizontal permutation} in the following manner:  each cycle of the permutation corresponds to a component of $ \alpha $ and its entries are given by the sequence of squares one obtains by traversing that component in the direction of orientation.  Similarly, we obtain a \textit{vertical permutation} associated with $ \beta $. A relabeling of the squares will produce horizontal and vertical permutations that are simultaneously conjugate to the first pair of permutations.

\section{Filling pairs as permutations} 

Let $\left\{\alpha, \beta \right\}$ be a minimal origami pair.
Then we can represent the surface as a horizontal row of $2g-1$ squares with $ \alpha $ and $ \beta $ as its horizontal and vertical curves.  The horizontal mid-segments of each square will correspond to $\alpha$ (the left- and right-most vertical edges are identified so that these segments form a closed curve). The intersection pattern of $\alpha$ and $\beta$ provides gluing instructions for the remaining edges of the squares, ensuring that each top edge is identified to one bottom edge and that the vertical mid-segments form a single simple closed curve which corresponds to $ \beta $. 

If we number the squares in the row from $1$ to $2g-1$ in a cyclic left-to-right order and orient $ \alpha $ to the right, then its associated horizontal permutation is simply $\sigma_{2g-1}=(1,2, \ldots, 2g-1) $.  We will denote by $\tau_{\alpha, \beta}$ the vertical permutation obtained by orienting $\beta$ so that each vertical mid-segment points upward.  We will see below that this permutation encodes all the relevant combinatorial information about the surface and its filling pair.  Since $ \beta $ is a single curve, $\tau_{\alpha, \beta}$ will necessarily be a $(2g-1)$-cycle.

\autoref{Fig:G3 example} illustrates a genus-$3$ example that is configured to be consistent with our construction in \autoref{main construction}.  At left is an origami in $\mathcal{H}(5)$  with the squares labeled in cyclic left-to-right order (starting with ``$3$'').  The numbering of the upper and lower edges corresponds to their gluing. At right is the filling pair drawn on $S_3$; the labeling of the curve segments corresponds to the labeling of the upper/lower edges. The permutation $\tau_{\alpha, \beta}$ is the one that maps the label of each upper edge to the label of its square.

\vspace{2 mm}

\begin{figure}[h]
\centering
\includegraphics[width=.8\linewidth]{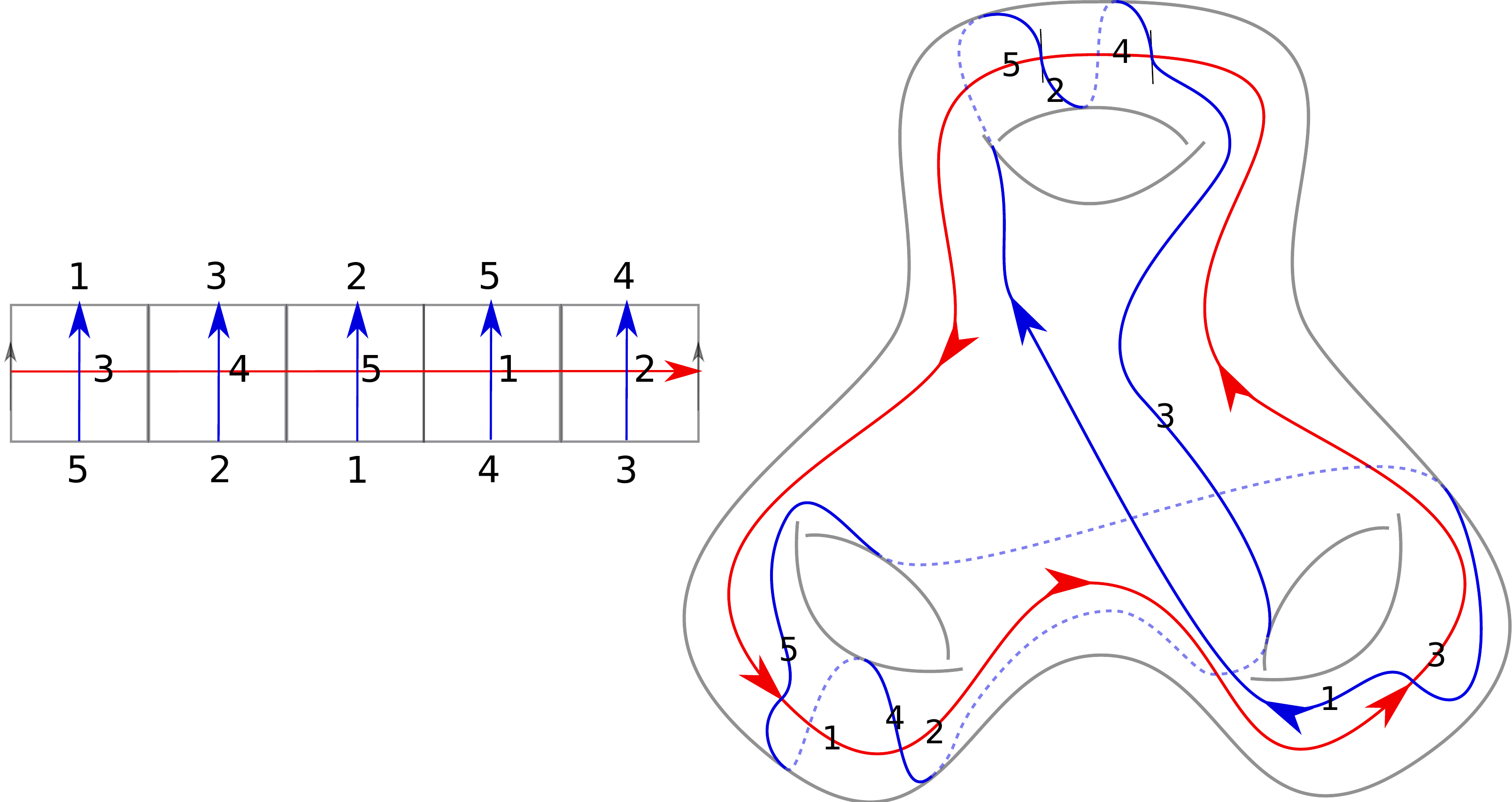}
\caption{\ The left illustrates a filling pair and the associated origami.  The right illustrates its geometric realization on $S_3$. For this example,  $\tau_{\alpha , \beta} = (1,3,5,4,2)$.  Please compare with the $S_7$-example in \autoref{g=7 example}.}
\label{Fig:G3 example}
\end{figure} 

\vspace{2 mm}

\begin{remark}[\textbf{A Note about Left and Right}]
In what follows, we adopt the convention of reading multiplication in the symmetric group from left to right:  if $ \sigma, \tau $ are permutations, then the permutation $ \sigma \tau $ is the one which performs $\sigma$ first and then $\tau$.  This choice affects certain other algebraic expressions that will be relevant to our arguments, namely conjugates and commutators.

When we refer to the operation ``conjugation by $ \tau $," we mean that, if a permutation is expressed in cycle notation, then the conjugate of that permutation is obtained by replacing each entry in that expression with its image under $\tau$.  Accordingly, ``the conjugate of $ \sigma $ by $ \tau $" is written in the form $ \tau^{-1} \sigma \tau $, rather than the form $ \tau \sigma \tau^{-1}$.

Finally, we define the commutator of $ \tau $ and $ \sigma $ by the formula  $ [\tau, \sigma]= \tau \sigma \tau^{-1} \sigma^{-1} $. 
\end{remark}

We can trace around the boundary of a regular neighborhood of $\alpha \cup \beta$ as follows. Each square of $S$ contains one element of $ \alpha \cap \beta$, and $\alpha \cup \beta$ subdivides the square into $4$ quadrants. We then choose a starting intersection point and, beginning in the upper-right quadrant, we travel upwards along $\beta$ to the next intersection (which is contained in the square whose lower edge is glued to the upper edge of the starting square).  Because every intersection has the same index, we arrive in its lower-right quadrant; we then travel along $\alpha$ to the right until we arrive at the next intersection point 
(in the lower-left quadrant of the square).  Next, we travel downwards along $\beta$ (opposite its orientation) to the upper-left quadrant of the next square.  Finally, we travel to the left along $\alpha$, arriving again in the upper-right quadrant of the next square. Repeating this pattern---up along $\beta$, right along $\alpha$, down along $\beta$, and then left along $\alpha$, never crossing $\alpha \cup \beta$---will trace around the boundary of the regular neighborhood.  Since $S \smallsetminus (\alpha \cup \beta)$ is a single disk, there is only one boundary component, so we will travel along both sides of every edge of $\alpha \cup \beta$ before arriving back at the starting place, after $2g-1$ iterations. 

This process can be described with permutations: if we travel along $\beta$ in the direction of orientation beginning at the square labeled $ x $, then our destination will be the square labeled $\tau_{\alpha, \beta}(x)$; if we travel in the direction opposite the orientation, we arrive at $\tau_{\alpha, \beta}^{-1}(x)$.  Similarly, the destination along $ \alpha $ is given by $\sigma_{g}$ or $ \sigma_g^{-1}$ (depending on the direction of travel).  Thus, the boundary components of $\alpha \cup \beta$ correspond to the cyclic factors of $ [\tau_{\alpha, \beta}, \sigma_g ]$.  A regular neighborhood of $ \alpha \cup \beta $ has one boundary component precisely when $ [\tau_{\alpha, \beta}, \sigma_g ] $ is a $(2g-1)$-cycle.

These observations show that if $\tau_{\alpha, \beta}$ is obtained as the vertical permutation of a minimal origami, it must have the following properties:

\begin{enumerate}
\item $\tau_{\alpha, \beta}$ is a $(2g-1)$-cycle 
\item $[\tau_{\alpha, \beta}, \sigma_{g}]$ is a $(2g-1)$-cycle. 
\end{enumerate}

The following result establishes that these properties completely characterize such permutations.

\begin{lemma} \label{characterizing perm} Let $\tau$ be a permutation satisfying the two properties listed above. Then there corresponds a filling pair $\left\{\alpha, \beta \right\}$ on $S_{g}$ satisfying 
\[ \iota(\alpha, \beta) = |\hat{\iota}(\alpha, \beta)|= 2g-1\]
such that $\tau$ is its vertical permutation. Furthermore, two such filling pairs are in the same $\mbox{Mod}^{\pm}(S_{g})$-orbit if and only if their vertical permutations are conjugate to one another by a permutation that commutes with $\sigma_{g}$. 
\end{lemma}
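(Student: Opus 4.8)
I would prove the two assertions in turn: first that the combinatorial data $\tau$ reconstructs an honest minimal origami pair, and then that this correspondence descends to a bijection between centralizer-conjugacy classes of such $\tau$ and $\mathrm{Mod}^{\pm}(S_g)$-orbits of pairs.

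\textbf{Reconstruction.} First I would build the surface directly from $\tau$. Take $2g-1$ unit squares in a horizontal row labeled $1,\ldots,2g-1$, glue the right edge of square $i$ to the left edge of square $i+1$ cyclically (so that the horizontal core curves assemble into a single closed curve $\alpha$ with horizontal permutation $\sigma_g$), and glue the top edge of square $i$ to the bottom edge of square $\tau(i)$. Property (1) guarantees that the vertical core curves assemble into a single closed curve $\beta$, so $\alpha$ and $\beta$ meet in exactly $2g-1$ points, one interior to each square. The trace-around computation preceding the lemma identifies the boundary components of a regular neighborhood of $\alpha\cup\beta$ with the cyclic factors of $[\tau,\sigma_g]$; property (2) then forces a single complementary region, so the pair fills, and the Euler characteristic count $2-2g=(2g-1)-2(2g-1)+1$ pins the genus at $g$. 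Since $\alpha$ runs rightward and $\beta$ upward through every square, all intersection indices agree, whence $|\hat{\iota}(\alpha,\beta)|=\iota(\alpha,\beta)=2g-1$. This realizes $\tau$ as the vertical permutation of a minimal origami pair.

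\textbf{Orbits.} For the classification, the key observation is that relabeling the squares by a permutation $\rho$ replaces $(\sigma_g,\tau)$ by $(\rho^{-1}\sigma_g\rho,\rho^{-1}\tau\rho)$, and that the normalization ``horizontal permutation $=\sigma_g$'' survives precisely when $\rho$ centralizes $\sigma_g$. Because the centralizer of a $(2g-1)$-cycle in $S_{2g-1}$ is exactly $\langle\sigma_g\rangle$, such a $\rho$ is a cyclic shift. For the backward direction, a cyclic shift is realized by the rotation homeomorphism of the row of squares, which carries one pair to the other and so keeps them in a common orbit. For the forward direction, a homeomorphism $\phi$ sending $\{\alpha_1,\beta_1\}$ to $\{\alpha_2,\beta_2\}$ may be isotoped to carry the graph $\alpha_1\cup\beta_1$ onto $\alpha_2\cup\beta_2$, inducing a bijection $\rho$ of the intersection points; reading off the cyclic orders along $\alpha_i$ and along $\beta_i$ shows that $\rho$ intertwines the two horizontal permutations and the two vertical permutations, yielding $\tau_2=\rho^{-1}\tau_1\rho$ with $\rho$ centralizing $\sigma_g$.

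\textbf{Main obstacle.} I expect the delicate step to be the orientation bookkeeping in the forward direction. A priori $\phi$ need only match the \emph{unoriented} curves, so the induced $\rho$ conjugates $\sigma_g$ to $\sigma_g^{\pm1}$ (and likewise for $\tau$), depending on whether $\phi$ preserves or reverses the cyclic order along $\alpha$ and whether it preserves or reverses the orientation of $S_g$. The real work is to verify that, after composing $\phi$ with an orientation-reversing model symmetry of the row of squares when necessary, one may always arrange $\rho$ to centralize $\sigma_g$ rather than merely normalize it—so that the various reversals are absorbed into centralizer-conjugacy rather than producing coarser identifications. This is exactly where the passage to $\mathrm{Mod}^{\pm}(S_g)$ (rather than $\mathrm{Mod}(S_g)$) and the consistent orientation of $\alpha$ and $\beta$ dictated by the flat structure both enter, and tracking these reversals carefully is the crux of the proof.
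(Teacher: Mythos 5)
Your proposal follows essentially the same route as the paper: reconstruct the origami by gluing the top of square $i$ to the bottom of square $\tau(i)$, use property (1) for a single vertical curve and property (2) plus the trace-around computation for a single complementary disk, and then observe that a homeomorphism between pairs lifts to a square-preserving map of the tilings that shifts labels cyclically, so the vertical permutations differ by conjugation by a power of $\sigma_g$ (the centralizer of a $(2g-1)$-cycle). The orientation bookkeeping you flag as the crux is in fact glossed over in the paper's own proof (which simply asserts $\tilde f(i)=k+i$), so your attention to the possible reversal absorbed by $\mathrm{Mod}^{\pm}(S_g)$ is, if anything, more careful than the published argument.
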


\begin{proof} Starting with a horizontal row of $2g-1$ squares, we can use $\tau$ to specify a gluing pattern for the top and bottom edges, as described above. The fact that $\tau$ is a $(2g-1)$-cycle will guarantee that the vertical mid-segments will join up under this gluing to yield a single simple closed curve which we refer to as $\beta_{\tau}$. Gluing the left- and right-most edges together turns the concatenation of the horizontal mid-segments into a second simple closed curve, denoted by $\alpha_{\tau}$.  It's clear that, if we number the squares from left to right in consecutive order, the horizontal permutation of this origami is $ \sigma_g $ and the vertical permutation is $ \tau $; therefore, the second property of $\tau$ implies that the boundary of a regular neighborhood of $\alpha_{\tau} \cup \beta_{\tau}$ has a single connected component. 

Now, suppose that $\left\{\alpha_{1}, \beta_{1} \right\}$ and $\left\{\alpha_{2}, \beta_{2} \right\}$ are two filling pairs which satisfy the required properties for geometric and algebraic intersection numbers and suppose there is a self-homeomorphism $ f $ of $ S_g $ mapping $\left\{\alpha_{1}, \beta_{1} \right\}$ to $\left\{\alpha_{2}, \beta_{2} \right\}$.  Since $\left\{\alpha_{1}, \beta_{1} \right\}$ is a minimal origami pair, there is a square-tiling $ R_1 $ of the surface which consists of a horizontal strip of $ 2g-1 $ squares whose horizontal curve is $\alpha_{1}$ and whose vertical curve is $\beta_{1}$.  Label the squares such that the horizontal permutation of the origami starting from the leftmost square is $ \sigma_g $ and let $ \tau $ be the corresponding vertical permutation.

The map $ f $ lifts to a map $ \tilde{f} $ mapping $ R_1 $ to a square-tiling $ R_2 $ for $\left\{\alpha_{2}, \beta_{2} \right\}$.  This $ \tilde{f} $ also acts as a permutation of the labels of the squares.  Label the squares of $ R_2 $ in consecutive order from left to right and let $ k+1=\tilde{f}(1) $.  Since $ \tilde{f} $ is a homeomorphism of $\alpha_{1}$ onto $\alpha_{2}$, it must map consecutive squares to consecutive squares, and so $ \tilde{f}(i)=k+i $ for each $ i \in \lbrace 1, 2, \ldots, 2g-1 \rbrace $.  Now let $ \tau' $ be the vertical permutation of $\left\{\alpha_{2}, \beta_{2} \right\}$.  Since $ \tilde{f} $ is a homeomorphism of $\beta_{1} $ onto $ \beta_{2} $, the observation above implies that, if $ j=\tau(i) $, then $ j+k=\tau'(i+k) $ for any $ i,j \in \lbrace 1, 2, \ldots, 2g-1 \rbrace $.  This means that, if we express both permutations in cycle notation, then the expression for $ \tau' $ is obtained from the expression for $ \tau $ by adding $ k $ to each entry.  In other words: \[ \tau' = \sigma_{g}^{-k} \tau \sigma_{g}^{k} \ . \]So $ \tau $ and $ \tau' $ are conjugate by a permutation that commutes with $ \sigma_g $.

Conversely, suppose there exist square-tilings $ R_1 $ and $ R_2 $ for $\left\{\alpha_{1}, \beta_{1} \right\}$ and $\left\{\alpha_{2}, \beta_{2} \right\}$ (respectively) with vertical permutations $ \tau $ and  $ \tau' $ (respectively) and that $ \tau $ and $ \tau' $ are conjugate by a permutation that commutes with $ \sigma_g $ (i.e., an element of the centralizer of $ \sigma_g $).  This centralizer is simply the subgroup $\langle \sigma_g \rangle $ generated by $\sigma_{g}$, so the two vertical permutations must be conjugate by a power of $ \sigma_g $.  Then it's possible to define a map from $ R_1 $ to $ R_2 $ that maps consecutive squares to consecutive squares and, therefore, maps $ \alpha_1 $ onto $ \alpha_2 $ and $ \beta_1 $ onto $ \beta_2 $.  This map descends to a self-homeomorphism of $ S_g $ mapping $\left\{\alpha_{1}, \beta_{1} \right\}$ to $\left\{\alpha_{2}, \beta_{2} \right\}$, as claimed.

Conversely, note that any homeomorphism sending $\alpha_{i}$ to $\beta_{i}$ will lift to a square-preserving homeomorphism from $R_{1}$ to $R_{2}$, \textit{after} potentially replacing $R_{2}$ with an equivalent diagram $R'_{2}$ obtained by cyclically rotating the squares so that the first square of $R_{1}$ is sent to the first square of $R'_{2}$. This homeomorphism will necessarily preserve the combinatorics of the gluing pattern. It follows that the permutations $\tau_{\alpha_{1}, \beta_{1}}, \tau_{\alpha_{2}, \beta_{2}}$ must coincide, up to conjugation by the centralizer of $\sigma_{g}$. 

\end{proof}

\autoref{characterizing perm} implies that the problem of building minimal origami pairs can be replaced by the problem of finding $(2g-1)$-cycles $\tau$ whose commutator with $\sigma_{g} $ is a $(2g-1)$-cycle. We will call such permutations \textit{minimal origami permutations}. Since the centralizer of $\sigma_{g}$ has order $2g-1$, we have the following direct consequence of \autoref{characterizing perm}:

\begin{proposition} \label{counting} Let $O_{g}$ denote the number of $\mbox{Mod}^{\pm}(S_{g})$-orbits of minimal origami pairs and let $P_{g}$ be the number of distinct minimal origami permutations. Then 

\[ O_{g} \geq \frac{P_{g}}{2g-1} \ . \]

\end{proposition}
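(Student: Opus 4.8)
The plan is to establish the inequality $O_g \geq P_g/(2g-1)$ by directly combining the two halves of \autoref{characterizing perm} with a counting argument about the group action of the centralizer of $\sigma_g$. First I would invoke \autoref{characterizing perm} to set up a correspondence: every minimal origami permutation $\tau$ gives rise to a minimal origami pair, and two such permutations yield filling pairs in the same $\mbox{Mod}^{\pm}(S_g)$-orbit if and only if the permutations are conjugate by an element of the centralizer $C(\sigma_g)$ of $\sigma_g$ in the symmetric group. Thus the set of $\mbox{Mod}^{\pm}(S_g)$-orbits of minimal origami pairs is in bijection with (or at least surjected onto by) the set of equivalence classes of minimal origami permutations under the relation $\tau \sim \tau'$ iff $\tau' = c^{-1}\tau c$ for some $c \in C(\sigma_g)$.

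Next I would frame this as a group action: let $C(\sigma_g)$ act on the set of all minimal origami permutations by conjugation. The key observation, already noted in the excerpt, is that this action is well-defined, since conjugating a minimal origami permutation by an element of $\langle \sigma_g \rangle$ preserves both defining properties (being a $(2g-1)$-cycle, and having a commutator with $\sigma_g$ that is a $(2g-1)$-cycle). The orbits of this action are precisely the equivalence classes counted by $O_g$, so $O_g$ equals the number of such orbits.

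The decisive step is then the orbit-counting bound. Since $C(\sigma_g) = \langle \sigma_g \rangle$ has order exactly $2g-1$ (as stated just before the proposition), every orbit of the conjugation action has size at most $|C(\sigma_g)| = 2g-1$. Because the minimal origami permutations partition into these orbits, the number of orbits is at least the total count $P_g$ divided by the maximum orbit size, giving
\[ O_g \geq \frac{P_g}{2g-1} \ . \]
This completes the argument.

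I anticipate that the main subtlety is simply making sure the equivalence is stated cleanly enough that the crude ``total divided by maximal orbit size'' estimate is valid; there is no genuine obstacle here, since we only need a lower bound and can afford to be lossy (orbits smaller than $2g-1$ only help the inequality). One should take care that $C(\sigma_g)$ really is all of $\langle \sigma_g \rangle$ and nothing larger, but this is exactly the centralizer computation for a single $(2g-1)$-cycle in the symmetric group and is cited in the text. The proof is therefore short and relies entirely on \autoref{characterizing perm} together with elementary group-action bookkeeping.
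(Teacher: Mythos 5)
Your argument is correct and is exactly the one the paper intends: the paper gives no separate proof, simply noting that the proposition is a direct consequence of \autoref{characterizing perm} together with the fact that the centralizer of $\sigma_g$ has order $2g-1$, which is precisely the orbit-size bound you spell out. No issues.
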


We conclude this section by again reframing the problem at hand in more convenient terms. Let $\rho$ be a $(2g-1)$-cycle so that: \medskip

\begin{enumerate}
\item $\rho \sigma_{g}^{-1}$ is a $(2g-1)$-cycle;
\item $\rho$ is conjugate to $\sigma_{g}$ by a $(2g-1)$-cycle. 
\end{enumerate} \medskip

Given such a $\rho$, let $\tau$ be the $(2g-1)$-cycle conjugating $\rho$ to $\sigma_{g}$. Thus 
\[ \sigma_{g} = \tau^{-1} \rho \tau \Rightarrow \rho = \tau \sigma_{g} \tau^{-1}. \]

But then
\[ [\tau, \sigma_g]=\tau \sigma_g \tau^{-1} \sigma_g^{-1}=\rho \sigma_g^{-1} \]
By the first property above, this commutator is a $(2g-1)$-cycle and by second property above, $\tau$ is a $(2g-1)$-cycle as well. Thus, by \autoref{characterizing perm}, $\tau$ is a minimal origami permutation. We replace the problem of finding the permutation $ \tau $ with the problem of finding the permutation $ \rho $.

\begin{lemma} \label{rephrase} Suppose $\rho_{1}, \rho_{2}$ satisfy the above two properties, and let $\tau_{1}, \tau_{2}$ be the $(2g-1)$-cycles that conjugate $\rho_{1}, \rho_{2}$ (respectively) to $\sigma_{g}$. Then if $\tau_1$ and $\tau_2$ are conjugate by an element of $\langle \sigma_{g} \rangle$, then the same holds for $\rho_1$ and $\rho_2$. 
\end{lemma}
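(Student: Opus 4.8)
The plan is to carry out a direct algebraic computation, exploiting the explicit relation $\rho_i = \tau_i \sigma_g \tau_i^{-1}$ established just above the statement, which tightly couples each conjugating cycle $\tau_i$ to the corresponding $\rho_i$. First I would unwind the hypothesis: in the paper's conjugation convention, saying that $\tau_1$ and $\tau_2$ are conjugate by an element of $\langle \sigma_g \rangle$ means that there is an integer $k$ with
\[ \tau_2 = \sigma_g^{-k}\, \tau_1\, \sigma_g^{k}. \]
The goal is then to exhibit a power of $\sigma_g$ conjugating $\rho_1$ to $\rho_2$, and I claim the \emph{same} exponent $k$ will serve.

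The core step is substitution. Writing $\rho_2 = \tau_2 \sigma_g \tau_2^{-1}$ and using $\tau_2^{-1} = \sigma_g^{-k} \tau_1^{-1} \sigma_g^{k}$, I would expand
\[ \rho_2 = \left( \sigma_g^{-k}\, \tau_1\, \sigma_g^{k} \right) \sigma_g \left( \sigma_g^{-k}\, \tau_1^{-1}\, \sigma_g^{k} \right). \]
The crucial observation is that all powers of $\sigma_g$ commute with one another, so the interior block $\sigma_g^{k}\, \sigma_g\, \sigma_g^{-k}$ telescopes to $\sigma_g$. This is the only place where any group structure is used, and it is precisely the feature—conjugation by the centralizer of $\sigma_g$ commuting with a middle factor of $\sigma_g$—that lets the relation pass cleanly from the $\tau_i$ to the $\rho_i$.

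After the collapse the expression reads
\[ \rho_2 = \sigma_g^{-k} \left( \tau_1\, \sigma_g\, \tau_1^{-1} \right) \sigma_g^{k} = \sigma_g^{-k}\, \rho_1\, \sigma_g^{k}, \]
which displays $\rho_2$ as the conjugate of $\rho_1$ by $\sigma_g^{k} \in \langle \sigma_g \rangle$, completing the argument.

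There is no genuine obstacle in this proof; it is a one-line manipulation once the hypothesis is translated into an exponent relation. The only thing demanding care is consistency with the paper's conventions---reading products left to right and writing the conjugate of $\sigma$ by $\tau$ as $\tau^{-1}\sigma\tau$---so I would verify at each step that the signs on the exponents of $\sigma_g$ and the ordering of the factors match those conventions, since a flipped inverse or an off-by-one in $k$ would be the likeliest (and essentially the only) source of error.
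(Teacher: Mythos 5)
Your proof is correct and is essentially identical to the paper's: both translate the hypothesis into $\tau_2 = \sigma_g^{-k}\tau_1\sigma_g^{k}$, substitute into $\rho_2 = \tau_2\sigma_g\tau_2^{-1}$, and let the interior powers of $\sigma_g$ cancel to obtain $\rho_2 = \sigma_g^{-k}\rho_1\sigma_g^{k}$. No further comment is needed.
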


\begin{proof} 




If there is some $k$ so that 
\[ \tau_2 = \sigma^{-k} \tau_1 \sigma^{k}, \]
then 
\[ \rho_2 = \tau_{2}\sigma_{g} \tau_{2}^{-1} \]
\[= (\sigma_{g}^{-k} \tau_{1} \sigma_{g}^{k}) \sigma_{g} (\sigma_{g}^{-k} \tau_{1}^{-1} \sigma_{g}^{k}) \]
\[ = \sigma_{g}^{-k}(\tau_{1}\sigma_{g} \tau_{1}^{-1})\sigma_{g}^{k} = \sigma_{g}^{-k}\rho_{1} \sigma_{g}^{k}.\]

\end{proof}

Thus, distinct solutions to the new permutation problem correspond to distinct solutions to the old one. We now go on to build solutions to the new problem.

\section{The main construction}
\label{main construction}

Our goal is to construct $(2g-1)$-cycles $\rho$ satisfying \medskip

\begin{enumerate}
\item $\rho \sigma_{g}^{-1}$ is a $(2g-1)$-cycle;
\item $\rho$ is conjugate to $\sigma_{g}$ by a $(2g-1)$-cycle
\end{enumerate} \medskip

We begin by outlining the construction in the case where $g$ is odd. Begin by placing the numbers $3,4,5,..., 2g-1,1,2$ (in that order) in a horizontal row. This will represent the permutation $\sigma_{g}$: placing parentheses around this list gives an expression of $\sigma_{g}$ in cycle notation. Placing the $1$ and $2$ at the end is a notational convenience whose purpose will be made clear later in the section. 

We will build our permutation $\rho$ by placing the numbers $\left\{1,2,..., 2g-1 \right\}$ in a second row above the $ \sigma_g $-row.  The permutation $\rho$ will be represented in cycle notation by placing parentheses around the numbers in this top row. The stacking of the two rows defines a third permutation $ \tau $ which maps each entry in the top row to the entry below it.  This is the permutation that conjugates $\rho$ to $\sigma_{g}$.

We proceed by first placing the number $ 1 $ in the top row above the number $ 3 $.  We then place each of the pairs $ (i, i+1) $ ($ i $ odd) in turn such that: \medskip

\begin{enumerate}
    \item The odd entry of the pair is placed over an even entry $ j $ of the bottom row (until there is no other choice but to place it over an odd entry, which will necessarily be $1$).
    \item The next pair to be placed in the top row is determined by the number $ j $.
\end{enumerate} \medskip

Since we placed $ 1 $ over $ 3 $, this determines that $ \tau(1)=3 $, so we begin by placing the pair $(3,2)$ over an arbitrary pair of numbers $(j, j+1)$ where $j$ is even. This dictates that $ \tau(1)=3 $, $\tau(3)= j$, and $\tau(2)= j+1$. Thus, the cycle expression for $\tau$ will contain a cycle that begins with $(1,3,j,...)$.  We do not yet know where the $2$ or the $j+1$ will appear in cycle notation, only that $2$ must appear directly before $j+1$ in some cycle.

Because $ 3 $ was placed over $ j $, the next pair to be placed in the top row is $ (j+1, j) $, which we place over some other pair of consecutive entries in the bottom row (making sure that $ j+1 $ is placed above an even number).  We continue in this manner until we have only one pair left to place (over the entries $ (1,2) $ in the bottom row; this was the reason for beginning the bottom row with $ 3 $ instead of $ 1 $).  We claim that this results in a cycle expression for $\tau$ which consists of a single $(2g-1)$-cycle, thus verifying the second of the two desired properties of $ \rho $.

\subsection{Example of the construction.}
\label{g=7 example}
We outline the above construction in an example for $g= 7$ (so $2g-1 = 13$). The construction begins with the following diagram: 

\[ \begin{array} {ccccccccccccc}
1 & . & . & . & . & . & . & . & . & . & . & . & . \\
3 & 4 & 5 & 6 & 7 & 8 & 9 & 10 & 11 & 12 & 13 & 1 & 2  
\end{array}\] \medskip

As described above, the first choice we make in the construction is to decide the placement of the pair $(3,2)$; we can choose arbitrarily between $(4,5), (6,7), (8,9), (10,11)$, and $(12,13)$ as numbers over which to place this pair. In this example, we will chose to place it over $(8,9)$, producing the following diagram: 

\[ \begin{array} {ccccccccccccc}
1 & . & . & . & . & 3 & 2 & . & . & . & . & . & . \\
3 & 4 & 5 & 6 & 7 & 8 & 9 & 10 & 11 & 12 & 13 & 1 & 2  
\end{array}\] \medskip

This dictates that $\tau(3)$ will be $8$, so the next choice will be the placement of $ (9,8) $; the possible options are $(4,5), (6,7), (10,11)$, and $(12,13)$. We will select $(12, 13)$, producing: 

\[ \begin{array} {ccccccccccccc}
1 & . & . & . & . & 3 & 2 & . & . & 9 & 8 & . & . \\
3 & 4 & 5 & 6 & 7 & 8 & 9 & 10 & 11 & 12 & 13 & 1 & 2  
\end{array}\] \medskip

We must next choose the position of the pair $(13,12)$, and we choose to place it over $(4,5)$: 

\[ \begin{array} {ccccccccccccc}
1 & 13 & 12 & . & . & 3 & 2 & . & . & 9 & 8 & . & . \\
3 & 4 & 5 & 6 & 7 & 8 & 9 & 10 & 11 & 12 & 13 & 1 & 2  
\end{array}\] \medskip

Now we can choose to place $ (5,4) $ over $(6,7)$ or $(10,11)$; we choose $(6,7)$: 

\[ \begin{array} {ccccccccccccc}
1 & 13 & 12 & 5 & 4 & 3 & 2 & . & . & 9 & 8 & . & . \\
3 & 4 & 5 & 6 & 7 & 8 & 9 & 10 & 11 & 12 & 13 & 1 & 2  
\end{array}\] \medskip

We now have only the pair $ (7,6) $ left to place and no choice but to place it over $(10,11)$, and finally to place $(11,10)$ over $(1,2)$: 

\[ \begin{array} {ccccccccccccc}
1 & 13 & 12 & 5 & 4 & 3 & 2 & 7 & 6 & 9 & 8 & 11 & 10 \\
3 & 4 & 5 & 6 & 7 & 8 & 9 & 10 & 11 & 12 & 13 & 1 & 2  
\end{array}\] \medskip

The top row is the cylce expression of our permutation $\rho=(1,13,12,5,4,3,2,7,6,9,8,11,10)$. We now verify the desired properties. The permutation $\tau$ is the map sending each number in the top row to the number directly below it in the bottom: 

\[ \tau= (1,3,8,13,4,7,10,2,9,12,5,6,11), \]
which is indeed a $13$-cycle, as desired. To check the first property, we verify that $\rho \sigma_{7}^{-1}$ is a $13$-cycle: 

\[ \rho \sigma_{7}^{-1} = (1,12,4,2,6,8,10,13,11,9,7,5,3). \]

\subsection{Proof of the construction in odd genus.}

\begin{theorem} \label{the construction} The permutation $ \rho $ produced by the construction described above always satisfies the desired two properties. 
\end{theorem}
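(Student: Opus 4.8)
The plan is to reduce everything to bookkeeping about the sequence of odd numbers that the recipe places, and then to trace two explicit permutations by hand. First I would record a completed diagram algebraically. Let $c_0 = 1,\ c_1 = 3,\ c_2, \dots, c_{g-1}$ be the odd numbers listed in the order their pairs are placed, and for $1 \le k \le g-1$ let $j_k$ be the (even) bottom entry over which the odd member $c_k$ is placed. The two rules of the recipe say exactly that $\tau(c_k) = j_k$, that the even member $c_k - 1$ sits over $j_k + 1$ (so $\tau(c_k - 1) = j_k + 1$), and that the next pair has odd member $j_k + 1$, i.e. $c_{k+1} = j_k + 1$. Eliminating $j_k$ yields the clean relations
\[ \tau(1) = c_1, \quad \tau(c_k) = c_{k+1} - 1, \quad \tau(c_k - 1) = c_{k+1} \quad (1 \le k \le g-2), \]
together with the terminal values $\tau(c_{g-1}) = 1$ and $\tau(c_{g-1} - 1) = 2$ coming from the forced last slot over $(1,2)$. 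Along the way I would note that the recipe never gets stuck: at each of the first $g-2$ steps some even slot remains free, the chained values $c_1, \dots, c_{g-1}$ are forced to be distinct (hence to exhaust $\{3, 5, \dots, 2g-1\}$), and the last pair is forced over $(1,2)$, so the diagram is always completed and the choices are unconstrained.

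To prove property (2)---that $\tau$ is a $(2g-1)$-cycle---I would pass to ``block--parity'' coordinates, writing each value as $O_k := c_k$ or $E_k := c_k - 1$, together with the lone $O_0 := 1$. The relations above say that $\tau$ increments the block index modulo $g$ while flipping the parity label: $O_k \mapsto E_{k+1}$ and $E_k \mapsto O_{k+1}$ for $1 \le k \le g-2$, with the special transitions $O_0 \mapsto O_1$, $O_{g-1} \mapsto O_0$, and $E_{g-1} \mapsto E_1$. Now I trace the orbit of $O_0$: it makes a first pass $O_0, O_1, E_2, O_3, E_4, \dots$ with the index increasing by one and the parity alternating, reaching block $g-1$. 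Because $g$ is odd, $g-1$ is even, so the parity there is $E$, and the special rule $E_{g-1} \mapsto E_1$ sends us into a second pass $E_1, O_2, E_3, \dots$, which reaches $(g-1, O)$ and closes up via $O_{g-1} \mapsto O_0$. The two passes visit each index $k \ge 1$ once with each parity, plus $O_0$, i.e. all $2g-1$ elements, so $\tau$ is a single $(2g-1)$-cycle.

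Property (1)---that $\rho \sigma_g^{-1}$ is a $(2g-1)$-cycle---I would instead read off positionally. Writing $t_p$ for the top entry in position $p$, so that $\rho(t_p) = t_{p+1}$, and using the left-to-right convention, the permutation $\mu := \rho \sigma_g^{-1}$ (which equals $[\tau, \sigma_g]$) satisfies $\mu(t_p) = \sigma_g^{-1}(t_{p+1}) = t_{p+1} - 1$. Since each pair fills a slot $(p, p+1)$ with the odd member $c$ on the left and $c-1$ on the right, this gives $\mu(c) = c - 2$ on every odd value $\ge 3$, so $\mu$ runs $2g-1 \to 2g-3 \to \cdots \to 3 \to 1$; while on the lone $1$ and on the even members it steps from one slot to the next: $\mu(1)$ is the even member of the first slot, the even member of slot $m$ maps to the even member of slot $m+1$, and the even member of the last slot maps to $\sigma_g^{-1}(t_1) = \sigma_g^{-1}(1) = 2g-1$. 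Concatenating, $\mu$ is the single cycle given by the descending odd chain followed by the even members in positional slot order; in particular this holds for every choice made in the construction.

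The genuinely delicate point---and the whole reason odd genus is singled out---is the closing-up argument for property (2): the first pass of the $\tau$-orbit must land on block $g-1$ with parity $E$ rather than $O$, since only then does it continue into a second pass instead of closing prematurely after merely $g$ elements. This is exactly the parity count $g - 1 \equiv 0 \pmod 2$, i.e. $g$ odd; for even $g$ the same trace closes after a single pass and $\tau$ fails to be a $(2g-1)$-cycle, which is precisely why the even case demands the separate construction. Property (1), by contrast, is robust and choice-independent, so I expect essentially no difficulty there.
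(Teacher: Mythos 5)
Your proposal is correct and takes essentially the same approach as the paper: the argument that $\tau$ is a single cycle is the same parity-of-$g-1$ bookkeeping (you trace the orbit of $1$ forward through two passes, where the paper walks backward from the last pair placed, alternating which of the two candidate cycles each entry lies in), and your identification of $\rho\sigma_g^{-1}$ as the even pair-members in slot order followed by the descending chain of odd numbers is exactly the paper's argument for property (1). Your algebraic packaging of the recipe via the $c_k$ and the block--parity coordinates is a cleaner formalization, but not a different proof.
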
 

\begin{proof}[Proof. Odd genus] The reader is encouraged to refer back to the example above during the course of the argument.  To show that $ \tau $ is a $ (2g-1) $-cycle, we will argue that every entry of $ \tau $ is either in the cycle containing $ 1 $ (denoted $ \kappa_1 $) or in the cycle containing $ 2 $ (denoted $ \kappa_2 $); we conclude by showing that $ 2 \in \kappa_1 $ and, therefore, $ \kappa_1=\kappa_2 $, so the entire permutation consists of a single cycle.

The permutation is determined by the placement of $ (g-1) $ pairs of the form $ (i+1, i) $, where $ i $ is even.  The last such pair is placed over the entries $ 1 $ and $ 2 $ so that, if $ (k+1, k) $ is that pair, then $ 1 $ is the image of $ k+1 $ and $ 2 $ is the image of $ k $; thus $ k+1 \in \kappa_1 $ and $ k \in \kappa_2 $.  The entries of this pair, though, are the images of the entries of the $ (g-2)^{nd} $ pair to be placed; since $ k $ is the image of its odd entry, that odd entry belongs to $ \kappa_2 $ and, since $ k+1 $ is the image of the even entry, the even entry belongs to $ \kappa_1 $.

We can proceed in this manner, ``walking back" to the first pair to be placed; with each step, we alternate whether the odd entry or the even entry belongs to $ \kappa_1 $:  after an odd number of steps, the even entry belongs to $ \kappa_1 $ and, after an even number of steps, the odd entry belongs to $ \kappa_1 $.  Since $ g-1 $ is even, it will take an odd number of steps to reach the first pair, which means that the even entry of this pair belongs to $ \kappa_1 $.  But the first pair to be placed was $ (3,2) $, which implies that $ \kappa_1=\kappa_2 $ as we claimed.  Thus $ \tau $ is a $ (2g-1) $-cycle.

To verify that $\rho \sigma_{g}^{-1}$ is a $(2g-1)$-cycle, we first remark that, from the second entry onward, the entries of $\rho$ alternate in parity, starting with odd. Moreover, for $j$ even, the $j^{th}$ entry is always an odd number that is one larger than the $(j+1)^{st}$ entry. It follows that $\rho \sigma_{g}^{-1}$ has a cycle expression that starts with 

\[ (1, 3^{rd} \hspace{1 mm} \mbox{entry of} \hspace{1 mm} \rho, 5^{th} \hspace{1 mm} \mbox{entry of} \hspace{1 mm} \rho,..., (2g-1)^{st} \hspace{1 mm} \mbox{entry of} \hspace{1 mm} \rho...) \].

Note that this sequence contains all the even numbers in $ \lbrace 1, 2, \ldots, 2g-1 \rbrace $.  The $(2g-1)^{st}$ entry of $\rho$ is, of course, sent to $1$ by $\rho$, which is then sent to $2g-1$ by $\sigma_{g}^{-1}$. By construction, every odd entry $ i>1 $ of $ \rho $ is followed by the number $ i-1 $; thus $ \rho \sigma_g^{-1}(2g-1)=2g-3, \rho \sigma_g^{-1}(2g-3)=2g-5 $, and so on through all the odd numbers in $ \lbrace 1, 2, \ldots, 2g-1 \rbrace $ (ending with $ 3 $, which then maps back to $ 1 $).  Since it contains all possible entries, $ \rho \sigma_g^{-1} $ is a $ (2g-1) $-cycle, completing the proof.

\end{proof} 

\begin{corollary} \label{odd theorem} For odd $g$, 

\[ P_{g} \geq (g-2)! \]
\end{corollary}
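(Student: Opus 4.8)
The plan is to read the bound directly off the combinatorics of the construction in \autoref{the construction}, by counting the distinct permutations it can output. By that theorem every run of the construction produces a $(2g-1)$-cycle $\rho$ satisfying the two desired properties, and the associated conjugating cycle $\tau$ is then a minimal origami permutation by the discussion preceding \autoref{rephrase} together with \autoref{characterizing perm}. Since the bottom row $3, 4, \ldots, 2g-1, 1, 2$ is fixed and the entry $1$ always occupies the leftmost top-row cell, the top row read left to right is exactly the cycle expression of $\rho$ beginning at $1$, and $\tau$ is recovered from the top row via $\tau(t_i) = b_i$, where $t_i, b_i$ are the top/bottom entries in column $i$. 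Hence the top-row arrangement, $\rho$, and $\tau$ all determine one another, so it suffices to produce $(g-2)!$ distinct top rows.

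First I would count the runs. The construction lays down the $g-1$ pairs $(3,2), (5,4), \ldots, (2g-1, 2g-2)$ one at a time; the chaining rule fixes which pair comes next, but its position is a free choice among the remaining legal slots. A slot is legal for a pair exactly when that pair's odd entry lands over an even bottom entry, so the legal targets are the $g-2$ ``even-starting'' slots over $(4,5), (6,7), \ldots, (2g-2, 2g-1)$, together with the single reserved slot over $(1,2)$ that receives the final pair. Thus $(3,2)$ has $g-2$ choices, each later placement has one fewer, and the last is forced; the number of runs is $(g-2)(g-3)\cdots 1 = (g-2)!$.

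Next I would prove injectivity: distinct runs yield distinct top rows. Suppose two runs first differ at the $k$-th placement, necessarily with $k \leq g-2$ since the final placement is forced. Their first $k-1$ choices agree, so the chaining dictates the same pair $p_k$ at step $k$ in both runs, but lays it over two different even-starting slots $A \neq B$. Because the $g-1$ pairs and $g-1$ slots are in bijection in each run, slot $A$ is filled by exactly one pair in the second run; since $p_k$ sits over $B \neq A$ there, that pair differs from $p_k$, and so the two top rows disagree in the columns of $A$. Hence distinct runs give distinct top rows, and therefore distinct $\rho$ and distinct $\tau$.

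Combining these gives at least $(g-2)!$ distinct minimal origami permutations, so $P_g \geq (g-2)!$. I expect the injectivity step to be the main obstacle: one must argue carefully that a first divergence at step $k$ survives into the final arrangement, which relies on the bijection between pairs and slots to guarantee that the slot vacated relative to the other run is genuinely occupied by a different pair. The choice-count itself is routine once one checks that at every step prior to the last at least one even-starting slot remains and that each such slot is a legal target.
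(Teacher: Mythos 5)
Your proposal is correct and follows essentially the same route as the paper: count the free choices in the construction ($g-2$ slots for the first pair, $g-3$ for the next, and so on, with the last placement forced over $(1,2)$), giving $(g-2)!$ runs. Your explicit injectivity argument (distinct runs yield distinct top rows, hence distinct $\rho$) is a sound and welcome elaboration of a step the paper leaves implicit.
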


\begin{proof} There are $g-2$ pairs of the form $(j,j+1)$ ($j$ even) in the bottom row.  For each such pair, we place above it a pair $(i+1, i)$ ($i$ even).  Thus there are $ g-2 $ places for the first $ i $-pair, $ g-3 $ for the second, and so on, until the last $ j $-pair has been taken and we are forced to place the last $ i $-pair above $ (1,2) $.  Thus, the number of $ \rho $'s that can be constructed is $ (g-2)! $.
\end{proof}

\begin{remark} \label{no conjugates} Lemma \ref{rephrase} implies that if solutions $\rho_1, \rho_2$ are not conjugate by an element of $\langle \sigma_{g} \rangle$, the corresponding minimal origami permutations will correspond to distinct $\mbox{Mod}^{\pm}(S)$ orbits of minimally intersecting filling pairs. Proposition \ref{counting} implies that at most $2g-1$ of the solutions constructed above can correspond to the same $\mbox{Mod}^{\pm}(S)$ orbit, but in fact, the orbits of our solutions under conjugation by $\langle \sigma_{g} \rangle$ are pairwise disjoint, as we now demonstrate. 

Fix a solution $\rho$; a conjugate of $\rho$ by an element of $\langle \sigma_{g} \rangle$ can be obtained by adding some $k$ (modulo $2g-1$) to each entry of the cycle notation for $\rho$. Assume that one of these conjugates produces another one of our solutions.

If $k$ is odd, the resulting permutation will not be built from pairs of the form $(i+1, i)$ with $i$ odd, as all of our solutions are, and thus we can assume that $k$ is even. Since $2g-1$ is odd, the unique solution to $k + x  = 1$ (mod $2g-1$) must be even-- denote this number by $j$. By construction, $j+1$ appears directly before $j$ in the cycle notation for $\rho$, and thus the conjugate $\sigma_{g}^{-k} \rho \sigma^{k}$ must have $2$ appearing directly before $1$. For this permutation to be one of our other solutions, $3$ must then occur immediately before $2$, as $(3,2)$ is one of the pairs from which our solutions are built. 

It follows that $(3,2)$ is the last pair placed in the construction of $\rho$; indeed, the last pair is always the one which cyclically appears directly in front of $1$. However, by design, $(3,2)$ is always the first pair placed. 

\end{remark}

\autoref{odd theorem} together with \autoref{no conjugates} immediately implies \autoref{main} for odd $g$. 

\subsection{The even case.} We next consider the case of even $g \geq 4$ (as shown in \cite{AH}, there are no filling pairs intersecting $3$ times on a genus $2$ surface). We begin with a permutation $ \xi $ on the numbers $ \lbrace 1, 2, \ldots, 2g-3 \rbrace $ obtained by the construction described above (since $ 2g-3=2(g-1)-1 $, this fits into the odd-genus case), and let $\eta$ denote the permutation conjugating $\xi$ to $\sigma_{g-1}$. Arbitrarily choose any odd $k \in \left\{3,5,7,..., 2g-3 \right\}$ not equal to $\eta^{-1}(1)$. In a horizontal row, we place the entries of $ \sigma_g $ in ascending order. As in the odd-genus case, we will build our permutation $ \rho $ by arranging the numbers $\left\{1,2,..., 2g-1 \right\}$ above this $ \sigma_g$-row.  The top row will then be a cycle expression for $ \rho $ and $ \tau $ will be the permutation that conjugates $ \rho $ to $ \sigma_g $.

We place each entry of $ \eta $ except for $ k $ above its image (so $ \tau(x)=\eta(x) $ for all $ x \notin \lbrace k, 2g-2, 2g-1 \rbrace $); we complete the top row by placing $k$ over $2g-1$, $2g-1$ over $2g-2$, and $2g-2$ over $\eta(k)$.

\subsection{Example in the even case} We give an example of the construction in the even case where $g= 10$, starting with the following $17$-cycle, which is a minimal origami permutation for $g=9$:

\[ \xi= (1,11,10,15,14,3,2,7,6,17,16,9,8,5,4,13,12). \] \medskip

The permutation $ \eta $ conjugating $ \xi $ to $ \sigma_{9} $ is 
\[ \eta= (1,3,8,15,6,11,4,17,12,2,9,14,7,10,5,16,13). \] \medskip

We arbitrarily choose the odd number $k=11$ and carry out the construction described above. We begin by placing the numbers from $1$ to $19$ in ascending order from left to right: 

\[ \begin{array} {ccccccccccccccccccc} 
1 & 2 & 3 & 4 & 5 & 6 & 7 & 8 & 9 & 10 & 11 & 12 & 13 & 14 & 15 & 16 & 17 & 18 & 19 
\end{array}\]

We then place each number other than $ k=11,\ 2g-2=18 $, and $ 2g-1=19 $ above its image under $ \eta $. This leads to the diagram 

\[ \begin{array} {ccccccccccccccccccc} 
13 & 12 & 1 & & 10 & 15 & 14 & 3 & 2 & 7 & 6 & 17 & 16 & 9 & 8 & 5 & 4 & &  \\
1 & 2 & 3 & 4 & 5 & 6 & 7 & 8 & 9 & 10 & 11 & 12 & 13 & 14 & 15 & 16 & 17 & 18 & 19 
\end{array}\] \medskip

Finally, we place $11$ over $19$, $19$ over $18$, and $18$ over $ \eta(11)= 4 $, yielding 

\[ \begin{array} {ccccccccccccccccccc} 
13 & 12 & 1 & 18 & 10 & 15 & 14 & 3 & 2 & 7 & 6 & 17 & 16 & 9 & 8 & 5 & 4 & 19  & 11 \\
1 & 2 & 3 & 4 & 5 & 6 & 7 & 8 & 9 & 10 & 11 & 12 & 13 & 14 & 15 & 16 & 17 & 18 & 19 
\end{array}\] \medskip

Denote by $\rho$ the permutation obtained by reading the top row from left to right and interpreting it as a $19$-cycle. We compute: 
\[ \rho \sigma_{10}^{-1} = (1,17,15,13,11,12,19,10,14,2,6,16,8,4,18,9,7,5,3), \] \medskip
verifying the first property of a minimal origami permutation. Letting $\tau$ denote the permutation conjugating $\rho$ to $\sigma_{10}$, we see that 
\[ \tau = (1,3,8,15,6,11,19,18,4,17,12,2,9,14,7,10,5,16,13), \]
verifying the second property.

To demonstrate the necessity of requiring that $k \neq \eta^{-1}(1)$, we include the following example shown to the authors by Luke Jeffreys. 

Beginning with $\xi = (1,3,2,5,4)$ in genus $3$, we can take $\eta$ to be $(1,3,4,2,5)$. Then suppose we choose $k= 5= \eta^{-1}(1)$ and proceed as above. This will yield the permutation $\rho= (1,3,2,7,5,6,4)$. One then computes that $\rho \sigma_{4}^{-1}$ is \textit{not} a $7$-cycle, as required:
\[ \rho \sigma_{4}^{-1} = (1,2,6,3)(4,7)(5) \]

\begin{proof}[Proof. Even genus]
We now prove that the construction demonstrated above always produces a minimal origami permutation. For $g$ even, let $\xi$ denote the $(2g-3)$-cycle used to construct the $(2g-1)$-cycle $\rho$ and note that, beginning with $ 1 $, the entries of $ \rho, \xi $ in cycle notation coincide \textit{until} the odd number $k$ appears in $ \xi $.  We will describe the cycle expression of $ \rho \sigma_g^{-1} $ one entry at a time; this description is diagrammed in \autoref{evencase} and we encourage the reader to refer to it as we go.

We begin the first (and, we claim, only) cycle of $ \rho \sigma_g^{-1} $ with 1 and assume, for the moment, that $ k \neq \xi(1) $.  To find the next entry, we evaluate $ \rho(1) $, which is equal to $ \xi(1) $, and is therefore an odd number greater than 1.  By construction, almost every odd entry of $ \xi $ is followed by its predecessor:  if $ i+1>1 $ is odd, then $ \xi(i+1)=i $.  This means that the $ 2^{nd} $ entry of $ \rho \sigma_g^{-1} $ will be the $ 3^{rd} $ entry of $ \xi $.  This entry will be even and the behavior described above will repeat: $ \rho \sigma_g^{-1} $ will land on each even entry of $\xi$ until we reach the position just before $ k $, since $k$ is to the right of $1$ in the top row owing to the fact that $\eta^{-1}(1) \neq k$. What we have just described accounts for the red portion of \autoref{evencase} at the top left.  Note that, if $ \xi(1)=k $, then what we describe next happens at the beginning of the cycle.

In the permutation $ \rho $, the position occupied by $ k $ in the permutation $ \xi $ is now occupied by the entry $ 2g-2 $, which is then sent to $ 2g-3 $ by $ \sigma_g^{-1} $.  This is the largest odd entry of $ \xi $, which means that the next entry of $ \xi $ (and also of $ \rho $) is its predecessor $ 2g-4 $, which $ \sigma_g^{-1} $ then sends to $ 2g-5 $.  From this point onward, the entries of $ \rho \sigma_g^{-1} $ will continue counting downwards through the odd entries of $ \xi $ (which agree again with the entries of $ \rho $) until we reach $ k $.  In the construction of $ \rho $, $ k $ was placed above $ 2g-1 $ as the last entry in the top row; this means that, if $ \rho(k)=j $, then $ j $ will be the last odd entry placed in $\xi $ (since $ j $ would have been placed above 1 in that construction and 1 has now been cycled to the front of the bottom row).  The permutation $ \sigma_g^{-1} $ sends $ j $ to $ j-1 $; but, since $ j $ is an odd entry of $ \xi $ greater than 1, $ j-1 $ is also the entry following $ j $ in the cycle notation of $ \xi $.  The entry $ j-1 $ was last in $ \xi $, which means that $ \rho(j-1)=1 $, so $ (\rho \sigma_g^{-1})(j-1)=2g-1 $. This paragraph has accounted for the black portion of \autoref{evencase} in the top right. 

The entry $ 2g-1 $ was placed above $ 2g-2 $ in the construction of $ \rho $ and $ k $ was placed above $ 2g-1 $, so $ \rho(2g-1)=k $, which $ \sigma_g^{-1} $ then sends to $ k-1 $.  But $ k-1 $ follows $ k $ in the cycle expression of $ \xi $, so now we return to the pattern at the beginning of the cycle:  the next entry of $ \rho $ is an odd entry of $ \xi $ greater than 1, which means that the next entry in both $ \rho $ and $ \xi $ is its predecessor. Thus the entries of $ \rho \sigma_g^{-1} $ now run through the even entries of $ \xi $ appearing after $k$ in $\xi$.  This continues until we reach the even entry $ \tau^{-1}(2g-3) $ (again, see \autoref{evencase}), which is just before 2g-1 near the end of the cycle.  From this position, $ \rho \sigma_g^{-1} $ sends us to $ 2g-2 $. The above accounts for the lower red portion of \autoref{evencase}.  

The entry $ 2g-2 $ in $ \rho $ occupies the position that $ k $ occupied in the cycle expression of $ \xi $, which means that the next entry in $ \rho $ will be $ k-1 $, which $ \sigma_g^{-1} $ sends to $ k-2 $.  And now we once again resume a previously-established pattern:  since $ k-2 $ is odd, the next entry of $ \rho $ is its predecessor $ k-3 $, which $ \sigma_g^{-1} $ sends to $ k-4 $.  The remaining entries of $ \rho \sigma_g^{-1} $ will be the odd numbers less than $ k $ in descending order, until we finally reach 1, completing the cycle.  The cycle we have just described consists of all the numbers in $ \lbrace 1, 2, \ldots, 2g-1 \rbrace $; since we have exhausted all possible entries of $ \rho \sigma_g^{-1} $, this permutation is a $ (2g-1) $-cycle, as claimed.  \medskip

\begin{figure}
\centering
\includegraphics[width= \textwidth, height= 80mm]{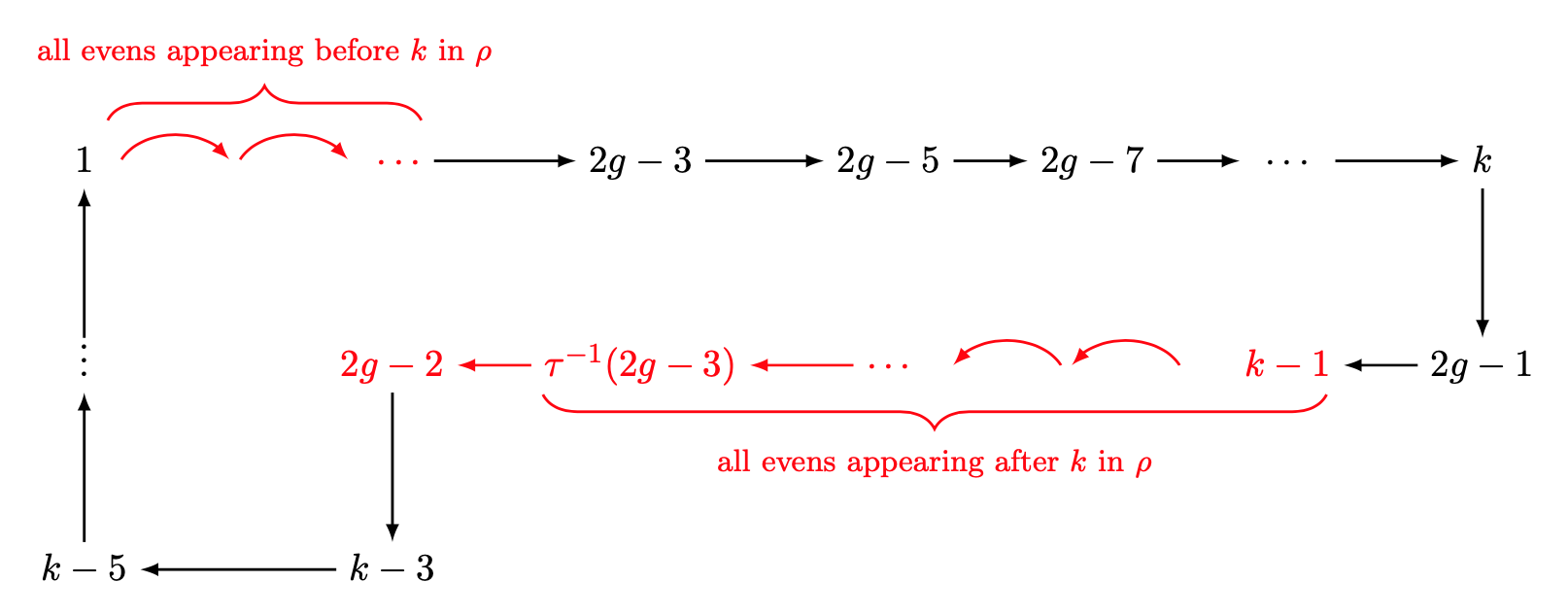}{}
\caption{}
\label{evencase}
\end{figure}

It remains to show that $\tau$, a permutation conjugating $\rho$ to $\sigma_{g}$, is a $(2g-1)$-cycle. A representative for the permutation $\tau$ in cycle notation beginning with the cycle containing $1$ will agree with the representative for $\eta$ that begins with $1$, until the occurrence of $k$ within $\eta$. At that point, the next entries of $\tau$ will be $2g-1, 2g-2, \eta(k)$, and after this, the subsequent entries will coincide exactly with what occurs in $\eta$ after $\eta(k)$. It follows that $\tau$ is a $(2g-1)$-cycle. \end{proof}

\vspace{3 mm} 

 Since there are $g-3$ odd numbers between $3$ and $2g-3$ (inclusive) not equal to $\eta^{-1}(1)$, we obtain: 

\begin{corollary}  For $g$ even, 
\[ P_{g} \geq (g-3)! \cdot (g-3) = (g-3)(g-3)! \]
\end{corollary}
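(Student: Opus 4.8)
The plan is to deduce the bound by counting the distinct permutations $\rho$ output by the even-genus construction. By the even-genus case of \autoref{the construction}, every such $\rho$ solves the two-property problem, and the construction simultaneously produces a conjugator $\tau$ with $\rho = \tau\sigma_g\tau^{-1}$; in particular $\rho$ is a function of $\tau$, so distinct $\rho$'s arise from distinct $\tau$'s (compare \autoref{rephrase}). Since each such $\tau$ is a minimal origami permutation, it suffices to show that the construction produces $(g-3)\cdot(g-3)!$ distinct permutations $\rho$.

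First I would count the admissible inputs $(\xi, k)$. Because $g-1$ is odd, \autoref{odd theorem} and its proof exhibit exactly $((g-1)-2)! = (g-3)!$ distinct permutations $\xi$, each with a well-defined conjugator $\eta$. For the choices of $k$, I would observe that $\{3, 5, \ldots, 2g-3\}$ contains exactly $g-2$ odd numbers, and that the excluded value $\eta^{-1}(1)$ always belongs to this set: in the odd-genus recipe the final pair placed is $(k'+1, k')$ over the bottom-row entries $(1,2)$, whence $\eta(k'+1) = 1$, so $\eta^{-1}(1) = k'+1$ is an odd number in $\{3, \ldots, 2g-3\}$. Removing it leaves exactly $g-3$ admissible values of $k$, for a total of $(g-3)!\cdot(g-3)$ input pairs.

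Next I would show the map $(\xi, k) \mapsto \rho$ is injective by reconstructing the input from $\rho$. Tracing the three special placements $\tau(k)=2g-1$, $\tau(2g-1)=2g-2$, $\tau(2g-2)=\eta(k)$ shows that $\rho(2g-1)=k$, so $k$ is read off immediately. Given $k$, I would recover $\xi$ from the cycle notation of $\rho$ by deleting the consecutive entries $2g-1,\,k$ (the entry $2g-1$ is always immediately followed by $k$) and then relabeling the unique occurrence of $2g-2$ as $k$; this returns the cycle notation of $\xi$. Thus $(\xi,k)$ is recoverable, distinct inputs give distinct outputs, and combining with the count yields $P_g \geq (g-3)\cdot(g-3)!$.

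The step I expect to be the main obstacle is pinning down the precise local relationship between the cycle notations of $\rho$ and $\xi$ that underlies the recovery map --- namely that passing from $\xi$ to $\rho$ amounts to the single substitution $k \mapsto 2g-2$ together with the single insertion of the block $2g-1,\,k$, with no interference near the wrap-around or near $\eta(k)$ and $\eta^{-1}(1)$. Establishing this requires a careful reading of the construction (as organized in \autoref{evencase}) to confirm that $k$, $2g-2$, and $2g-1$ occupy distinct, identifiable positions; once this local picture is in hand, both the injectivity and the final count follow routinely.
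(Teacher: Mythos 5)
Your proposal is correct and takes essentially the same route as the paper, which obtains the bound by multiplying the $(g-3)!$ genus-$(g-1)$ permutations $\xi$ by the $g-3$ admissible choices of $k$. The paper leaves implicit both the injectivity of $(\xi,k)\mapsto\rho$ and the fact that $\eta^{-1}(1)$ always lies among the $g-2$ odd candidates, whereas you verify both; your recovery procedure (read off $k=\rho(2g-1)$, delete the cyclic block $2g-1,\,k$, relabel the unique $2g-2$ as $k$) is consistent with the structure of the construction.
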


\begin{remark} \label{no conjugates even} Analogous to Remark \ref{no conjugates}, we now show that the solutions described above can be conjugate to one another by elements of $\langle \sigma_{g} \rangle$ only under very specific circumstances. To this end, assume there are solutions $\rho_1, \rho_2$ constructed above and some $k$ so that $\sigma_{g}^{-k} \rho_1 \sigma_{g}^{k} = \rho_2$. 

Let $t_{1}$ (resp. $t_{2}$) denote the odd number in $\left\{3,..., 2g-3 \right\}$ used to construct $\rho_{1}$ (resp. $\rho_{2}$) from solutions in genus $g-1$. In cycle notation for $\rho_{i}$, each odd number $n$ except for $1$ and $2g-1$ appears directly before $n-1$. Moreover, $2g-1$ appears directly before $t_{i}$ and $2g-2$ appears directly before $t_{i}-1$. It follows that 
\[ \rho_{i}(2g-1)- 1 = \rho_{i}(2g-2). \]

Adding $k$ (modulo $2g-1$) to each entry of $\rho_{1}$ produces $\rho_{2}$. Let $j \in \left\{1,..., 2g-1 \right\}$ satisfy $j+k = 2g-1$; since $k$ is (wlog) not a multiple of $2g-1$, $j \neq 2g-1$. 

If $j=1$, we can take $k= 2g-2$ and thus $2g-1 + k = 2g-2$ (mod $2g-1$), so the position of $2g-2$ in $\rho_{2}$ will coincide with the  position of $2g-1$ in $\rho_{1}$. Hence, $t_{2} - 1 = t_{1} + k$ (mod $2g-1$), which implies that $t_{1} = t_{2}$. The position of $t_{2}$ in $\rho_{2}$ coincides with the position of $\rho_{1}(1)$ in $\rho_{1}$, and thus $\rho_{1}(1)= t_{2}+1= t_{1}+1$, which is even. By construction, this is only possible if $\rho_{1}(1) = 2g-2$, and this would imply that $t_{1} = 2g-3$. 

Next, assume that $j= t_{1}$. In this case we can take $k= 2g-1 -t_{1}$ (which is even), and this implies that the position of $2g-1$ in $\rho_{1}$ coincides with the position of $2g-1 - t_{1}$ in $\rho_{2}$. Any even number $n$ in $\rho_{2}$ either has the property that it is directly preceded by $n+1$, or it is one of $t_{2}-1, 2g-2$. If $2g-1 - t_{1}$ is preceded by $2g-t_{1}$ in $\rho_2$, then $1$ directly precedes $2g-1$ in $\rho_{1}$, contradicting the construction of $\rho_{1}$. 

If $2g-1-t_{1} = 2g-2$, then $\rho_{2}(2g-1-t_{1})= t_{2}-1$, but by assumption this is also $t_{1}+k$ since the position of $2g-1$ in $\rho_1$ coincides with the position of $2g-1-t_{1}$ in $\rho_{2}$. Thus, (mod $2g-1$) we obtain 
\[ t_{2}- 1 = t_{1}+k = t_{1} + 2g-1 - t_{1} = 2g-1, \]
which implies that $t_{2} = 1$, a contradiction. 

If $2g-1 -t_{1} = t_{2}- 1$, then $2g-2$ must immediately precede it in $\rho_{2}$. It then follows that $t_{1}- 1$ immediately precedes $2g-1$ in $\rho_{1}$. This in turn implies that at the end of the cycle for $\rho_1$, one sees $(...2g-2, t_{1}-1, 2g-1, t_{1})$. This specifies $\rho_1$ amongst all solutions obtained from the same permutation in odd genus: $\rho_1$ is the one for which the associated odd number $t_{1}$ is $\eta^{-1}(2g-3)$. 

Next, assume $j$ is odd and not equal to either $1$ or $t_{1}$. Then $\rho_{1}(j)= j-1$, and so $j-1+k = t_{2}$. However, $k$ is even in this case because both $2g-1$ and $j$ are odd, so $j-1+k$ is even, but $t_{2}$ is odd. 

On the other hand, if $j$ is even, $k$ is odd, and this will imply that one sees $g-3$ occurrences of $(... i, i-1,...)$ with $i$ even in the cycle for $\rho_{2}$, which is impossible by construction. 

To summarize: given $\rho_1$, there exists $\rho_2$ conjugate to it by an element of $\langle \sigma_{g} \rangle$ only if it is constructed from a genus $g-1$ solution by choosing either $\eta^{-1}(2g-3)$ or $2g-3$ as the odd number centered in the construction of an even genus solution from an odd one. 

Thus, given a solution in odd genus, there are $g-5$ ways of extending it to a solution in even genus such that none of the resulting permutations are conjugate by $\langle \sigma_{g} \rangle$.

\end{remark}

\section{Exploring $SL(2, \mathbb{Z})$-orbits} 

As discussed in the Introduction, the $SL(2, \mathbb{R})$-action on $\mathcal{H}(2g-2)$ restricts to an $SL(2, \mathbb{Z})$-action on the origamis contained in the minimal stratum.

The $SL(2, \mathbb{Z})$-action can be easily described in terms of the generators 

\[ A = \left( \begin{array} {cc} 
			1 & 1 \\
			1 & 0 
			\end{array} \right),  B = \left( \begin{array} {cc} 
									0 & -1 \\
									1 & 0 
									\end{array} \right).  \]
The generator $A$ has the effect of a rightward shear, transforming the unit square into a parallelogram by rotating its vertical edges $45$ degrees clockwise; the generator $B$ rotates the plane by $90$ degrees counter-clockwise. We define the action of either $A$ or $B$ on an origami by cutting the origami into its constituent squares, applying the appropriate generator to each square individually (after identifying it with the standard unit square in $\mathbb{R}^{2}$), and then gluing the images back together. The $B$ generator thus sends origamis to origamis. The shear generator $A$ yields a surface tiled by congruent parallelograms, each comprised of two isosceles right triangles; by cutting and re-gluing right triangles, one obtains a new square-tiling of the surface. 

Since there are only finitely many ways to glue $2g-1$ squares together, the orbit of any origami constructed in Section $4$ is finite. The cardinality of the orbit can be related to the index $ |SL(2, \mathbb{Z}) : V| $, where $ V $ is the Veech group of the origami. 

The number of squares is an obvious invariant of the $SL(2, \mathbb{Z})$-action. A slightly subtler invariant is the \textit{monodromy}, which is the (conjugacy class of the) subgroup of the symmetric group $ \Sigma_N $ generated by the vertical and horizontal permutations (where $N$ is the number of squares).

In collaboration with Zichen Cui, Ajeet Gary, Paige Helms, Ionnis Iakovoglou, Tasha Kim, and J.T. Rustad, we implemented the construction outlined in the previous section in order to form a computational library of origamis in $\mathcal{H}(2g-2)$ with $2g-1$ squares and one cylinder in each of the vertical and horizontal cylinder decompositions, for $g=3,4,5,6,7$. We summarize some of the findings in \autoref{table}.

\begin{figure}[htp]
\centering
\includegraphics[width= .8\textwidth, height= 80mm]{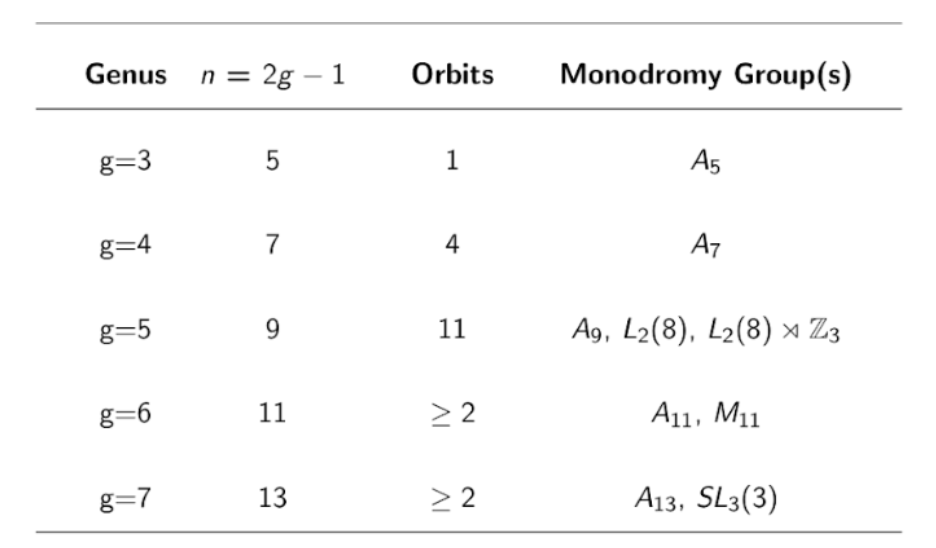}{}
\caption{}
\label{table}
\end{figure}

In fact, for $g \leq 5$, the search spaces are sufficiently small that it is possible to find all minimal origami permutations. Thus, for these genera, we determine the exact number of orbits of origamis in $\mathcal{H}(2g-2)$ with $2g-1$ squares and one cylinder for each of the vertical and horizontal cylinder decompositions. 

The group $M_{11}$ is the sporadic Mathieu group on $11$ symbols, while $L_{2}(8)$ is the group that admits the following presentation: 
\[ \langle a,b \ | \  a^{2} = b^{3}= ((ababab^{-1})^{2}ab^{-1})^{2}= 1 \rangle \]
While we conjecture that the number of orbits grows super-exponentially as a function of $g$ (and, furthermore, that many orbits can be found amongst the origamis constructed in Section $4$), we note that we do not even know if the number of orbits is monotonically increasing in $g$.  

\bibliographystyle{abbrv}
\bibliography{AougabMenascoNieland}

\end{document}